\newcommand{\CC}{\mathbb{C}}
\newcommand{\R}{\mathbb{R}}
\newcommand{\N}{\mathbb{N}}
\newcommand{\Z}{\mathbb{Z}}
\newcommand{\1}{\mathbbm{1}}
\renewcommand{\P}{\mathbb P}
\newcommand{\RE}{\textrm{Re}}
\newcommand{\al}{\alpha}
\newcommand{\ga}{\gamma}
\newcommand{\ep}{\varepsilon}
\newcommand{\de}{\delta}
\newcommand{\te}{\theta}
\newcommand{\toop}{\stackrel{\P}{\longrightarrow}}
\newcommand{\schw}{\stackrel{\raisebox{-1pt}{\textup{\tiny d}}}{\longrightarrow}}
\newcommand{\eqschw}{\stackrel{d}{=}}
\newcommand{\bee}{\begin{equation}}
\newcommand{\eee}{\end{equation}}
\newcommand{\beea}{\begin{array}}
\newcommand{\eeea}{\end{array}}
\renewcommand{\theequation}{\arabic{section}.\arabic{equation}}
\theoremstyle{plain}
\newtheorem{prop}{Proposition}[section]
\newtheorem{theo}[prop]{Theorem}
\newtheorem{lem}[prop]{Lemma}
\theoremstyle{definition}
\newtheorem{rem}[prop]{Remark}
\begin{document}

\title{Asymptotic theory for quadratic variation of \\ harmonizable fractional stable processes} 

\author{Andreas Basse-O'Connor\thanks{Department
of Mathematics, University of Aarhus, 
E-mail: basse@math.au.dk.}  \and
Mark Podolskij\thanks{Department
of Mathematics, University of Luxembourg,
E-mail: mark.podolskij@uni.lu.}
\thanks{Mark Podolskij
gratefully acknowledges financial support of ERC Consolidator Grant 815703
``STAMFORD: Statistical Methods for High Dimensional Diffusions''.}}

\maketitle

\begin{abstract}
\noindent In this paper we study the asymptotic theory for quadratic variation of a   
harmonizable fractional $\al$-stable process. We show a law of large numbers with a non-ergodic limit and obtain weak convergence towards a L\'evy-driven Rosenblatt random variable when the Hurst parameter satisfies $H\in (1/2,1)$ and $\al(1-H)<1/2$. This result complements the asymptotic theory for fractional stable processes investigated in e.g.\ \cite{BHP19,BLP17,BP17,BPT20,LP18,MOP20}.

\ \

\noindent
{\it Key words}: \
fractional processes, harmonizable processes, limit theorems, quadratic variation, stable L\'evy motion.
\bigskip

\noindent
{\it AMS 2010 subject classifications}: 60F05, 60F15, 60G22, 60G48, 60H05. 

\end{abstract}

\section{Introduction} \label{sec1}
\setcounter{equation}{0}
\renewcommand{\theequation}{\thesection.\arabic{equation}}

Since the seminal work of Mandelbrot and Van Ness \cite{MV68} fractional stochastic processes and fields have gained a lot of attention in the probabilistic and statistical literature. One of the key results of \cite{MV68} was the introduction of the 
\textit{fractional Brownian motion}, the most famous fractional stochastic process. The authors show that, up to scaling,   fractional Brownian motion with Hurst parameter $H\in (0,1)$ is the unique self-similar  centered Gaussian process with  stationary increments.  
When the Gaussianity assumption is replaced by  $\al$-stable distributions
the aforementioned class becomes much richer: Rosinski \cite{R95} shows that it consists of (mixed) \textit{moving average $\al$-stable processes},  \textit{$\al$-stable harmonizable processes} and processes of a third kind (which lack an explicit description). The most well-known representative of the first class is the \textit{linear fractional stable motion} (lfsm), whose probabilistic and statistical properties have been studied in numerous articles. We refer to \cite{BHP19,BLP17,BP17,BPT20, LP18,PT03,PTA07} for a detailed exposition of limit theorems
for functionals of lfsm, and to \cite{AH12,DI17,GLT15,LP20, LP22,MOP20} for statistical estimation of lfsm and related processes. 

In the paper \cite{CM89} the authors introduced the $\al$-stable  harmonizable fractional motion, defined as
\bee \label{model}
X_t= \int_{\R} \frac{\exp(i ts) -1}{i s} |s|^{1-H -1/\al}\, dL_s,
\eee
where $i:=\sqrt{-1}$, $H\in (0,1)$ is the Hurst parameter and 
$L_t:=L_t^1 + i L_t^2$, $t\in \R$, denotes an  isotropic
complex-valued $\al$-stable L\'evy motion with $\alpha\in (0,2)$.  The processes $(X_t)_{t\in \R}$ is self-similar with index $H$, i.e.\ 
$(X_t)_{t\in \R} \eqschw  (a^{-H}X_{at})_{t\in \R} $, and possesses stationary increments. Probabilistic properties of model \eqref{model} and their extensions have been studied in numerous articles, see e.g.\ \cite{AX16,BCI02, CM89, DS11,KM91} among others. However, to the best of our knowledge, asymptotic theory for statistics of harmonizable fractional $\al$-stable processes has not yet being investigated in the literature. In contrast to lfsm, harmonizable fractional $\al$-stable processes are not ergodic, which indicates potential difficulties when studying limit theorems.

This paper aims to derive  asymptotic results for the quadratic statistic 
\begin{equation}\label{ljsdlfjlsjh}
\sum_{j=0}^{n-1} \|X_j -X_{j-1} \|^2,
\end{equation}
and our main statement  is the following theorem:

\begin{theo} \label{main}
Let $(X_t)_{t\geq 0}$  be the  $\al$-stable harmonizable fractional  motion introduced in \eqref{model} with parameters $\alpha\in (0,2)$ and $H\in (0,1)$. 
\begin{enumerate} 
\item [(i)] \label{sldsdfsdjflsj} As $n\to\infty$, 
\bee \label{lln}
\frac{1}{n}\sum_{j=0}^{n-1} \|X_j -X_{j-1} \|^2 \toop U:=2 \int_{\R} |s|^{-2H-2/\al}
\left(1-\cos(s) \right) d([L^1]_s+[L^2]_s),
\eee
where $[L^k]$ denotes the quadratic variation of $L^k$ for $k=1,2$.

\item [(ii)] For $H>1/2$
and $\alpha(1-H)<1/2$ we obtain the weak convergence
\begin{align}  \label{clt} 
{}& n^{2-2H}\left(\frac{1}{n}\sum_{j=0}^{n-1} \|X_j -X_{j-1} \|^2 - U \right) \\
{}& \qquad \quad
\schw 
2{\normalfont \RE}\Big( \int_{\R^2} \frac{1-\exp(i(s-u)) }{i(s-u)} |su|^{1-H-1/\al} 1_{\{u<s\}} \,d\overline{L}_u \,dL_s\Big).
\end{align} 
\end{enumerate} 
\end{theo}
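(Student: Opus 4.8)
The plan is to realise the whole statistic as a double stochastic integral against the product measure $dL\,d\overline{L}$ and to split it into a diagonal part, which reproduces $U$ exactly, and an off-diagonal part, which carries the fluctuations. Setting $g(s)=\frac{e^{is}-1}{is}|s|^{1-H-1/\al}$ we have $X_j-X_{j-1}=\int_\R e^{i(j-1)s}g(s)\,dL_s$, and therefore
\[
\frac1n\sum_{j=0}^{n-1}\|X_j-X_{j-1}\|^2
=\int_{\R^2} D_n(s-u)\,g(s)\overline{g(u)}\,dL_s\,d\overline{L}_u,\qquad
D_n(\theta):=\frac1n\sum_{j=0}^{n-1}e^{i(j-1)\theta},
\]
where $D_n$ is a Fej\'er-type average with $D_n(0)=1$, $|D_n|\le1$, and $D_n(\theta)\to0$ for $\theta\notin2\pi\Z$. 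On the diagonal $s=u$ one has $g(s)\overline{g(s)}=2(1-\cos s)|s|^{-2H-2/\al}$ and $D_n(0)=1$, and since $L$ is purely discontinuous the diagonal of the double integral collects $\sum_s(\cdot)\,|\Delta L_s|^2$, which is precisely $U$, independently of $n$. I would make this rigorous in the LePage representation of the isotropic complex stable integral: realising the jumps of $L$ as a Poisson point process $\{(s_k,r_k)\}$ on $\R\times(0,\infty)$ with intensity $ds\,r^{-1-\al}\,dr$ together with independent phases $\xi_k$ uniform on the unit circle, one has $X_j-X_{j-1}=\sum_k e^{i(j-1)s_k}g(s_k)\,r_k\xi_k$ (with compensation when $\al\ge1$), so that the off-diagonal remainder
\[
R_n:=\frac1n\sum_{j=0}^{n-1}\|X_j-X_{j-1}\|^2-U
=\sum_{k\ne l}D_n(s_k-s_l)\,g(s_k)\overline{g(s_l)}\,r_kr_l\,\xi_k\overline{\xi_l}
\]
is, conditionally on the radial data $\{(s_k,r_k)\}$, a centred bilinear form in the independent phases.

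For part (i) I would compute the conditional second moment. Since $\E[\xi_k\overline{\xi_l}\mid\text{radial}]=\de_{kl}$ and $\E[\overline{\xi_l}^{\,2}]=0$, only the diagonal pairing $k=k'$, $l=l'$ survives and
\[
\E\big[\,|R_n|^2\mid\text{radial}\,\big]=\sum_{k\ne l}|D_n(s_k-s_l)|^2\,|g(s_k)|^2|g(s_l)|^2\,r_k^2r_l^2\le U^2.
\]
Because $U=\sum_k|g(s_k)|^2r_k^2<\infty$ almost surely (equivalently $g\in L^\al(\R)$, which holds for all $\al\in(0,2)$ and $H\in(0,1)$), the summand is dominated by the summable family $|g(s_k)|^2|g(s_l)|^2r_k^2r_l^2$; as $|D_n|\le1$ and $D_n(s_k-s_l)\to0$ for the almost surely distinct frequencies, dominated convergence gives $\E[|R_n|^2\mid\text{radial}]\to0$ almost surely, whence $R_n\toop0$ and \eqref{lln} follows.

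For part (ii) I would normalise $R_n$ by $n^{2-2H}$ and read off the scaling limit from the self-similarity of $L$. The intensity $ds\,r^{-1-\al}\,dr$ is invariant under $(s,r)\mapsto(s/n,\,n^{-1/\al}r)$, so substituting $s_k=\sigma_k/n$ and $r_k=n^{-1/\al}\rho_k$ (keeping the phases) produces an equality in law in which the powers of $n$ can be counted: the prefactor gives $n^{1-2H}$, the two integrators give $n^{-2/\al}$, the two factors $|s_k|^{1-H-1/\al}=n^{-(1-H-1/\al)}|\sigma_k|^{1-H-1/\al}$ give $n^{-2(1-H-1/\al)}$, and the inner sum $\sum_{j=0}^{n-1}e^{i(j-1)(\sigma_k-\sigma_l)/n}\sim n\,\frac{e^{i(\sigma_k-\sigma_l)}-1}{i(\sigma_k-\sigma_l)}$ gives $n^{1}$; these cancel to $n^0$. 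At the same time $\frac{e^{i\sigma/n}-1}{i\sigma/n}\to1$, so the rescaled kernel tends pointwise to $\frac{e^{i(\sigma-\tau)}-1}{i(\sigma-\tau)}|\sigma\tau|^{1-H-1/\al}$, and the candidate limit is the off-diagonal double integral $\int_{\R^2}\frac{e^{i(\sigma-\tau)}-1}{i(\sigma-\tau)}|\sigma\tau|^{1-H-1/\al}\,dL_\sigma\,d\overline{L}_\tau$. Splitting into $\{\tau<\sigma\}$ and $\{\tau>\sigma\}$, the two regions are complex conjugates of one another (swap $\sigma\leftrightarrow\tau$ and use $dL_\tau\,d\overline{L}_\sigma=\overline{dL_\sigma\,d\overline{L}_\tau}$), which collapses the integral to $2\,\RE\big(\int_{\{u<s\}}\cdots\big)$ and reproduces the right-hand side of \eqref{clt}.

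The technical heart, and the step I expect to be the main obstacle, is upgrading this pointwise kernel convergence to genuine weak convergence of the double stable integrals. I would argue through the conditional bilinear structure in the phases: show that the conditional second moment of $n^{2-2H}R_n$ converges to that of the candidate limit, add a Lindeberg-type estimate guaranteeing that the infinitely many small Poisson points are asymptotically negligible, and conclude with a continuity argument for these phase-bilinear forms (equivalently, a convergence theorem for multiple stable integrals). Finiteness of the limiting conditional second moment, namely of $\sum_{k\ne l}\big|\frac{e^{i(\sigma_k-\sigma_l)}-1}{i(\sigma_k-\sigma_l)}\big|^2|\sigma_k\sigma_l|^{2-2H-2/\al}\rho_k^2\rho_l^2$, is an $L^{\al/2}$-type integrability condition on the limit kernel; carrying out the Poisson computation, the binding region is $|\sigma|,|\tau|\to\infty$ with $\tau$ close to $\sigma$, where $|\sigma|^{2-2H-2/\al}$ together with the $|\sigma-\tau|^{-2}$ decay forces exactly the constraint $\al(1-H)<1/2$. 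The hypothesis $H>1/2$ places the increments in the long-memory regime, which is what makes $n^{2-2H}$ the correct normalisation and ensures that this Rosenblatt-type integral dominates the central ($\sqrt n$) contribution. For $\al\ge1$ one must additionally track the compensators in the LePage expansion, which I would handle with the same second-moment estimates.
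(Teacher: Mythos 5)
Your diagonal/off-diagonal split is structurally the same decomposition the paper uses ($\|X_j-X_{j-1}\|^2=U+V_j$, obtained there via the semimartingale integration-by-parts identity \eqref{lsjdfljs}), but for part (i) you prove $n^{-1}\sum_j V_j\toop 0$ by a genuinely different and essentially self-contained route: condition on the radial Poisson data and compute the second moment of the bilinear form in the independent phases. The computation is correct --- only the pairing $k=k'$, $l=l'$ survives, the conditional variance is dominated by the a.s.\ finite quantity $U^2$, and dominated convergence applies since $|D_n|\le 1$ and $D_n(s_k-s_l)\to 0$ off the a.s.\ empty resonance set. This replaces the appeal to the Kallenberg--Szulga dominated convergence theorem (Proposition~\ref{propdom}) by an elementary phase-averaging argument; what it buys is transparency, and what it costs is that for $\al\ge 1$ you must separately justify expanding the product of two compensated LePage series into the double sum $\sum_{k,l}$ (the paper gets this identity for free from It\^o's formula in Remark~\ref{rem1}(B)). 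That gap is real but routine to fill.

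For part (ii) the scaling count, the identification of the limit kernel, the conjugate-symmetry collapse to $2\RE(\int_{\{u<s\}}\cdots)$, and the diagnosis of where $\al(1-H)<1/2$ enters the integrability of the limit are all correct and match the paper. The proof of convergence, however, has a genuine hole: your rescaling $s_k=\sigma_k/n$ only sees frequencies of order $1/n$ (the paper's term $Z_n^{(1)}$), whereas after multiplication by $n^{2-2H}\to\infty$ the dangerous contributions come from \emph{resonant} pairs at macroscopic frequencies, namely $s_k-s_l$ within $O(1/n)$ of a nonzero multiple of $2\pi$, where $|D_n(s_k-s_l)|$ is of order $1$ rather than $o(1)$. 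Each such term is inflated by the diverging factor $n^{2-2H}$, so one must show that the collective mass of resonant pairs decays fast enough; this is exactly the content of the paper's bound \eqref{gnbound}, $\|g_n(x)\|\le Cn^{1-a}|x-[x]|^{-a}$ with $[x]$ the nearest point of $2\pi\Z$, combined with the choice $a=1/\al-\ep$ so that $n^{2-2H-a}\to 0$ while $|x-[x]|^{-a\al}$ remains locally integrable --- the hypothesis $\al(1-H)<1/2$ is used a second time precisely here, in the treatment of $Z_n^{(2)}$ and $Z_n^{(3)}$. Your proposed ``Lindeberg-type estimate for the small Poisson points'' does not touch this issue, since the resonant frequencies are neither small nor near the origin. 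Relatedly, convergence of conditional second moments plus a ``continuity argument'' is not by itself a proof of weak convergence for these non-Gaussian phase-quadratic forms; the workable completion is conditional $L^2$ convergence of $n^{2-2H}R_n$ to the limit on a common probability space, and that again requires a domination of the rescaled kernel that is uniform in $n$ \emph{including} the resonance regions. Until that estimate is supplied, the argument for \eqref{clt} is incomplete.
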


\begin{rem} 
(A)
The result of Theorem~\ref{main}(i) is in sharp contrast to the setting of short memory processes. To make this comparison concrete, let us consider a sequence  $(Y_i)_{i\in \N}$ of independent $\alpha$-stable isotropic  random variables with $\alpha\in (0,2)$. According to the
 Stable  Central Limit Theorem, cf.\  \cite[Theorem~5.25]{JHJ},  
\begin{equation}
\frac{1}{n^{2/\alpha}} \sum_{j=0}^{n-1} \| Y_j\|^2\schw Z,  
\end{equation}
where $Z$ is an $(\alpha/2)$-stable random variable. That is, 
for 
independent $\alpha$-stable random variables, the quadratic variation 
$\sum_{j=0}^n \|Y_j\|^2$ is of the order $n^{\alpha/2}$, whereas 
 $\sum_{j=0}^{n-1} \| X_j-X_{j-1}\|^2$ 
is of the order $n$, cf.\ Theorem~\ref{main}(i). This difference may be viewed 
as an indication  of the strong dependence between  the random variables 
$(X_j-X_{j-1})_{j\in \N}$.

\smallskip
\noindent 
(B) We believe that the randomness of the limit in \eqref{lln} 
is connected to the non-ergodicity of the process $X$. 

\smallskip
\noindent 
(C) The weak convergence result of \eqref{clt} is similar in spirit to the Gaussian case. Indeed, the quadratic variation of the fractional Brownian motion with Hurst parameter $H$ is asymptotically distributed according to the Rosenblatt random variable when $H\in (3/4,1)$ with convergence rate $n^{2H-2}$; the classical Rosenblatt random variable possesses the same representation as the right hand side of \eqref{clt} where $L$ is replaced by a complex-valued Gaussian measure. We also recall that there is no convergence regime of the form \eqref{clt} in the setting of lfsm 
(cf.\ \cite{BHP19,BLP17}).

\smallskip
\noindent 
(D) The proof of Theorem~\ref{main} heavily relies on the quadratic form of the statistic. Extensions to more general functional forms and statistical estimation of the process $X$ are therefore far from obvious. \hfill $\diamond$
\end{rem}

The rest of the paper is structured as follows. In Chapter \ref{sec2}, we discuss the integration theory and a dominated convergence result we will use in our proof. Chapter \ref{sec3} is devoted to the proof of Theorem \ref{main}.

\subsection*{Notation}
Throughout the paper all random variables are defined on a  probability space $(\Omega,\mathcal F,\P)$.
By $\toop$ we denote convergence in probability, and $\schw$ denotes convergence in distribution. 
For a complex number $x\in \mathbb{C}$, $\overline{x}$ denotes its complex conjugate
and $\|x\|$ its norm. Furthermore, we denote by $\text{Re}(x)$ (resp. $\text{Im}(x)$) the real (resp.\ imaginary) part of $x$.   
%For two sequences $(a_n)_{n\in \N}$ and $(b_n)_{n\in \N}$ we write $a_n\sim b_n$ if 
%$c_1a_n \leq b_n \leq c_2a_n$ for some constants $c_2>c_1>0$. 

\section{Some background} \label{sec2} \label{sec2}
\setcounter{equation}{0}
\renewcommand{\theequation}{\thesection.\arabic{equation}}

In this section we will collect some important results on 
stochastic integration theory. 
A complex-valued random variable $Z$ is called isotropic if it holds that $\exp(i\te)Z\eqschw Z$ for any $\te \in (0,2\pi)$.  Similarly, a stochastic process $(Z_t)_{t\in \R}$ 
is isotropic  when any linear combination $\sum_{j=1}^d a_j Z_{t_j}$ is isotropic. It is easy to see
that a complex-valued L\'evy process $L$ with characteristic triplet $(0,0,\nu)$, where $\nu$ denotes the L\'evy measure,  is isotropic if and only if 
\bee
\nu \circ (x\mapsto  x\exp(i\te))^{-1}= \nu \qquad \forall \te \in (0,2\pi).
\eee 
In particular, $L^1$ and $L^2$ both have the same symmetric $\al$-stable distribution. For any measurable function 
$f:\R\to \mathbb{C}$ we define
\begin{align}
{}& \int_{\R} f(s) \,dL_s\\
{}& \quad :=\Big( \int_{\R}  \text{Re}(f(s)) \,dL^1_s - \int_\R 
\text{Im}(f(s)) \,dL^2_s\Big) 
+i \Big( \int_{\R}   \text{Re}(f(s)) \,dL^2_s + \int_\R \text{Im}(f(s)) \,dL^1_s\Big),
\label{ljsdfljslj}
\end{align}
whenever the four real-valued integrals on the right-hand side exist.  
We note that the  integral $\int_{\R} f(s) \,dL_s$ is well defined if and only if $\int_{\R} \|f(s)\| \,dL^1_s$ is well defined, which holds if 
$\int_{\R} \|f(s)\|^{\al}\, ds<\infty$.  The next result, which follows by \cite[Theorem 2]{ST94}, gives a criterion for the existence 
of the double integral:

\begin{prop} \label{int-condi} 
Let $\psi:\R\to (0,\infty)$ be a measure function satisfying $\int_\R \psi(s)^\alpha\,ds=1$, and suppose   that $(Z_t)_{t\in \R}$ is a two-dimensional symmetric $\alpha$-stable L\'evy process with $Z_t = (Z^1_t,Z^2_t)$. The double integral $\int_{\R^2} f(u,v) \,dZ^1_u\,dZ^2_v$  of a measurable function $f:\R^2\to \R$ vanishing outside $\{(u,v)\in \R:u<v\}$ 
exists when the following condition holds: 
\begin{equation}\label{slfjlsfhg}
\int_\R \int_\R |f(s,t)|^\alpha \Big[1+ \log_+\Big(\frac{|f(s,t)|}{\psi(s) \psi(t)}\Big)\Big] \,ds\,dt<\infty,
\end{equation}
where $\log_+(x):=\log(x)$ for $x\geq 1$ and $\log_+(x):=0$ for $x<1$. 
\end{prop}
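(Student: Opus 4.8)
The plan is to deduce the statement directly from the general existence criterion for double stable integrals in \cite[Theorem 2]{ST94}, the whole task being to match our setting to theirs and then to read off \eqref{slfjlsfhg} as their integrability hypothesis. Concretely, I would realise $Z^1,Z^2$ as arising from a symmetric $\alpha$-stable random measure on $\R$ with Lebesgue control measure, and take $m(ds):=\psi(s)^\alpha\,ds$ as the reference probability measure; this is permissible precisely because the normalisation $\int_\R\psi(s)^\alpha\,ds=1$ turns $m$ into a probability measure, and $\psi(s)$ is then the $(1/\alpha)$-th power of the density of Lebesgue measure with respect to $m$. Because $f$ vanishes off $\{(u,v):u<v\}$, the integral $\int_{\R^2}f(u,v)\,dZ^1_u\,dZ^2_v$ charges no diagonal and is a genuine off-diagonal (iterated) integral rather than a multiple integral with a diagonal part; this is exactly the regime of \cite[Theorem 2]{ST94}, where condition \eqref{slfjlsfhg} is, after this identification, verbatim their integrability requirement, the factor $\psi(s)\psi(t)$ inside the logarithm being the product density attached to $m\otimes m$. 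Thus the core of the argument reduces to checking measurability of $f$ and the assumed bound \eqref{slfjlsfhg}, and invoking the theorem.

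It is worth recording why the criterion carries the logarithmic correction, as this also indicates how a self-contained proof would run. I would build the integral iteratively, first forming the inner integral $g(v):=\int_{(-\infty,v)}f(u,v)\,dZ^1_u$, which for a.e.\ $v$ is a symmetric $\alpha$-stable variable of scale $\big(\int_\R|f(u,v)|^\alpha\,du\big)^{1/\alpha}$; finiteness of this scale for a.e.\ $v$ already follows from \eqref{slfjlsfhg}. When the two components are independent, the double integral then exists as soon as the random field $v\mapsto g(v)$ is $Z^2$-integrable, i.e.
\begin{equation}
\int_\R |g(v)|^\alpha\,dv<\infty\qquad\text{almost surely};
\end{equation}
for dependent components this step is replaced by the joint random-measure analysis of \cite{ST94}. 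The logarithm enters through the elementary truncated-moment behaviour of stable laws: for a symmetric $\alpha$-stable variable $\xi$ of scale $0<c\le 1$ one has $\E\big[\,|c\,\xi|^\alpha\wedge 1\,\big]\asymp c^\alpha\big(1+\log_+(1/c)\big)$, and it is this logarithmic factor that, after integration against Lebesgue measure with $\psi$ supplying the reference scale, reproduces the weight $1+\log_+\!\big(|f(s,t)|/(\psi(s)\psi(t))\big)$ appearing in \eqref{slfjlsfhg}.

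The hard part, and the reason the statement is quoted rather than settled by a soft estimate, is the almost-sure finiteness of $\int_\R|g(v)|^\alpha\,dv$. This \emph{cannot} be obtained from a first-moment bound: for each fixed $v$ one has $\E|g(v)|^\alpha=\infty$, so $\E\int_\R|g(v)|^\alpha\,dv=\infty$. Nor does a naive pathwise bound suffice, since expanding $g$ through the LePage series of $Z^1$ and applying subadditivity (for $\alpha<1$) yields a majorant of the form $\sum_j\Gamma_j^{-1}\,\phi(U_j)$ with $\phi\ge 0$ of finite $m$-mean $\int_{\R^2}|f|^\alpha$, and such a series diverges almost surely. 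Finiteness therefore genuinely relies on the sign cancellations in $g$ and must be established by a careful split into the bulk $\{|g(v)|\le 1\}$, controlled in expectation through the truncated moment above, and the heavy tail $\{|g(v)|>1\}$, controlled via the jump configuration of $Z^1$ (e.g.\ conditionally on the LePage arrival times). Arranging these two regimes so that their combined contribution is almost surely finite is precisely the technical content of \cite[Theorem 2]{ST94}; once it is available, verifying \eqref{slfjlsfhg} closes the argument.
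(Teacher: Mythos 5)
Your proposal takes essentially the same route as the paper: Proposition~\ref{int-condi} is not proved from scratch there either, but is obtained by identifying the setting with that of \cite[Theorem 2]{ST94} (off-diagonal double integral, reference probability measure $\psi(s)^\alpha\,ds$) and reading off \eqref{slfjlsfhg} as the integrability hypothesis of that theorem; your additional discussion of where the logarithmic correction comes from is accurate but is background rather than part of the argument. The one substantive point the paper addresses that you gloss over is that \cite[Theorem 2]{ST94} is stated only for a \emph{single} stable random measure, i.e.\ for $Z^1=Z^2$; the paper extends it to distinct components $Z^1\neq Z^2$ of the two-dimensional process by a polarization identity (working with $Z^1+Z^2$ and $Z^1-Z^2$, in analogy with $xy=((x+y)^2-(x-y)^2)/4$), whereas your appeal to ``the joint random-measure analysis of \cite{ST94}'' does not actually supply this step. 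This is a small but genuine omission if one wants the statement exactly as formulated.
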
 

We remark that \cite[Theorem 2]{ST94} is only stated in the case where $Z^1=Z^2$; however, the extension to $Z^1\neq Z^2$ follows directly using a polarization techniques similar to the polarization equality for real numbers $xy= ((x+y)^2-(x-y)^2)/4$. Proposition~\ref{slfjlsfhg}  extends directly to complex-valued functions and processes as well.

%
%
%Now, we recall some basic facts about double stochastic integrals with respect to L\'evy processes without a Gaussian part. Our exposition mainly follows the results of 
%\cite{KS89,KW87}. Let $(Z_t)_{t\in \R}$ be a real-valued symmetric L\'evy process with characteristic triplet 
%$(0,0,\nu)$. We introduce the function 
%\bee
%\vp(x):= \int_{0}^{\infty} \min\{(xu)^2 \wedge 1\}\, \nu(du)
%\eee
%and define the \textit{Orlicz norm} of a function $f:\R \to \R$ with respect to $\vp$ as
%\bee
%\|f\|_{\vp}:= \inf \left\{a\geq 0: \, \int_{\R} \vp (f(s)/a) =c \right\},
%\eee
%where $c>0$ is an arbitrary fixed number. Next, we introduce the function $G:\R\times \R_+^2\to \R_+$ as
%\bee
%G(x,y,z):= \int_{(0,\infty)^2} \min\left\{ \left(x[u]^{1/y} [v]^{1/z}  \right)^2,1 \right\} \, \nu(du) \nu(dv) 
%\eee
%where $[u]^r = u$ for $u\in[-r,r]$, $[u]^r=r$ for $u>r$, and $[u]^r=-r$ for $u< -r$. With these notations we obtain the following statement.
%
%\begin{prop} \label{prop1} (\cite[Theorem 5.1]{KW87})
%Let $h:\R^2 \to \R$ be a deterministic function vanishing on the set $\{(u,s)\in \R^2: \, u\geq s\}$. 
%Define the function $F(x,y,z):=G(x,y,z) + \vp(y) \vp(z)$. Then the double integral $\int_{\R^2}
%h(u,v) dZ_u dZ_v$ is well defined if and only if 
%\bee \label{intcond}
%\int_{\R^2} F\left(h(s,t), \|h(s,\cdot)\|_{\vp},\|h(\cdot,t)\|_{\vp}\right) dsdt<\infty. 
%\eee
%\end{prop}

\begin{rem} \label{rem1}
Let $g:\R\to\CC$ is measurable function with 
$\|g\|_{\al}^{\al}:= \int_\R \|g(x)\|^{\al} dx<\infty$ and set  $f(u,v)=\overline{g}(u)g(v) 1_{\{u<v\}}$ for all $u,v\in \R$. Then we have the following statements. 

\noindent
(A) The function $f$ satisfies  condition \eqref{slfjlsfhg}, and hence the 
 double integral $\int_{\R^2}f(u,v) \,d\overline{L}_u\, dL_v$ is well defined, 
 cf.\ Proposition~\ref{int-condi}.  Indeed, \eqref{slfjlsfhg} follows by choosing 
   $\psi(s)= \|g(s)\|/\|g\|_\alpha$. 

\noindent
(B) The following identity holds
  \begin{equation}\label{lsjdfljs} 
 \Big\| \int_\R g(s)\,dL_s\Big\|^2
 = 2\text{Re}\Big(\int_{\R^2} \overline{g}(u)g(s) 1_{\{u<s\}} \,d\overline{L}_u\, dL_s\Big) + \int_\R \|g(s)\|^2
\,d([L^1]_s+[L^2]_s).
 \end{equation}
 To show \eqref{lsjdfljs} we note that 
$Y_t:=\int_{-\infty}^{t} g(u)\, dL_u$ exists for all  $t\in \R \cup \{\infty\}$.
For all $w<t$ in $\R$ we have by integration by parts that 
\begin{align}
Y_t\overline{Y}_t -Y_w\overline{Y}_w= {}& \int_w^t Y_{s-} \,d\overline{Y}_s+ \int_w^t \overline{Y}_{s-}\,dY_s+[Y,\overline Y]_t-[Y,\overline Y]_w\\
 = {}& 2\text{Re}\Big(\int_u^t \overline{Y}_{s-} \,dY_s\Big)+
 [Y,\overline Y]_t-[Y,\overline Y]_w,\label{ljsdlfjhlsdh} 
\end{align}
where $[Z,\tilde Z]$ denotes the quadratic variation of two complex-valued semimartingales $Z$ and $\tilde Z$. 
If we let $t\to\infty$ and $w\to-\infty$ in  \eqref{ljsdlfjhlsdh} we obtain
\begin{equation}\label{dsljfdsljfls}
Y_\infty\overline{Y}_\infty= 2\text{Re}\Big(\int_\R \overline{Y}_{s-} \,dY_s\Big)+
 [Y,\overline Y]_\infty.
\end{equation}
By reconizing the integral on the right-hand side of \eqref{dsljfdsljfls}
as a double integral and calculating the 
quadratic variation $[Y,\overline Y]$ we 
derive the identity 
\begin{equation}\label{ljsdlfjsl} 
\|Y_\infty\|^2=Y_\infty\overline{Y}_\infty= 2\text{Re}\Big(\int_{\R^2} \overline{g}(u)g(v) 1_{\{u<v\}} \,d\overline{L}_u\, dL_v\Big) + \int_\R 
\|g(s)\|^2
\,d([L^1]_s+[L^2]_s),\end{equation}
where we have used that $[L,\overline{L}]_t=[L^1]_t+[L^2]_t$.
Equation~\eqref{ljsdlfjsl} implies \eqref{lsjdfljs}. 
 \hfill $\diamond$
\end{rem}

We will also use the dominated convergence result 
\cite[Theorem~6.2(c)]{KS89}, which reads as follows.

\begin{prop} \label{propdom}
Assume that $(h_n)_{n\geq 1}$, $h$ and $g$ are deterministic measurable functions $\R^2\to\R$ supported on $\{(u,v)\in \R^2\!:u<v\}$  all satisfying condition \eqref{slfjlsfhg}. Suppose  moreover that $(Z_t)_{t\in \R}$ is a two-dimensional symmetric $\alpha$-stable L\'evy process with $Z_t = (Z^1_t,Z^2_t)$.  If $h_n(u,v) \to h(u,v)$ for Lebesgue almost all points $(u,v)\in \R^2$ and $\sup_n \|h_n\| \leq g$, then 
\bee
\int_{\R^2}
h_n(u,v)\, dZ^1_u \,dZ^2_v \toop \int_{\R^2}h(u,v) \,dZ_u^1\, dZ^2_v
 \qquad \text{as } n\to \infty.
\eee 
\end{prop}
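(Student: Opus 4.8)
The plan is to reduce the stochastic dominated-convergence statement to a purely deterministic convergence of the modular that governs the integration theory, and then to feed that into the tail inequality built into the construction of the double integral. Set $d_n:=h_n-h$, so that the claim reduces to $\int_{\R^2}d_n(u,v)\,dZ^1_u\,dZ^2_v\toop 0$. The hypotheses give $\sup_n|h_n|\le g$ and $h_n\to h$ almost everywhere, so passing to the limit yields $|h|\le g$, hence $|d_n|\le 2g$ almost everywhere, while $d_n\to 0$ almost everywhere. Fix the normalising function $\psi$ of \eqref{slfjlsfhg} (the one for which $g$ satisfies the condition); since the integrand in \eqref{slfjlsfhg} is nondecreasing in $|f|$, the bound $|d_n|\le 2g$ shows that $d_n$ satisfies the condition as well. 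I write $\Phi(f):=\int_\R\int_\R |f(s,t)|^\al\big[1+\log_+(|f(s,t)|/(\psi(s)\psi(t)))\big]\,ds\,dt$ for the modular on the left-hand side of \eqref{slfjlsfhg}.

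First I would record the elementary scaling and domination properties of $\Phi$. Using $\log_+(ab)\le \log_+ a+\log_+ b$ one checks that $\Phi(\la f)\le \la^\al\big[(1+\log_+\la)\int|f|^\al+\int|f|^\al\log_+(|f|/(\psi\psi))\big]$, so that $\Phi(\la f)<\infty$ for every $\la>0$ whenever $\Phi(f)<\infty$; in particular $2\la g$ satisfies \eqref{slfjlsfhg} for all $\la>0$ because $g$ does. Since $x\mapsto x^\al(1+\log_+(x/c))$ is nondecreasing on $(0,\infty)$ for every $c>0$, the integrand defining $\Phi(\la d_n)$ is pointwise dominated by $(2\la g)^\al(1+\log_+(2\la g/(\psi\psi)))$, which is integrable. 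As $\la d_n\to 0$ almost everywhere the integrand tends to $0$ pointwise, so the classical (deterministic) dominated convergence theorem gives
\bee
\Phi(\la d_n)\longrightarrow 0 \qquad \text{for every } \la>0.
\eee

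It remains to transfer this deterministic limit to convergence in probability. This is the step carrying the real content and where \cite{KS89} is used: the construction of the double stable integral $f\mapsto \int_{\R^2}f\,dZ^1\,dZ^2$ comes with a tail inequality of the form $\P\big(|\int_{\R^2}f\,dZ^1\,dZ^2|>\ep\big)\le C\,\Phi(f/\ep)$ for a constant $C=C(\al)$ (more precisely, the $L^0$-quasinorm of the integral is dominated by a function of the modular that vanishes as the modular does), obtained through the decoupling of the two integrators and the series representation of a symmetric $\al$-stable random measure. Granting this, for any fixed $\ep>0$ I would apply the inequality to $f=d_n$ and invoke the previous step with $\la=1/\ep$ to obtain $\P\big(|\int_{\R^2}d_n\,dZ^1\,dZ^2|>\ep\big)\le C\,\Phi(d_n/\ep)\to 0$, which is exactly the asserted convergence in probability.

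The main obstacle is precisely the tail inequality invoked above. Unlike the single-integral case, where the scale parameter equals $\|f\|_\al$ and the law is genuinely $\al$-stable (so $\int f_n\,dZ\toop 0$ iff $\|f_n\|_\al\to 0$), the double integral has heavier tails and is not stable, so the clean homogeneity $\sigma\mapsto\sigma^\al$ is replaced by the $L^\al\log L$ modular $\Phi$; establishing the bound $\P(|\cdot|>\ep)\le C\,\Phi(\cdot/\ep)$ requires the decoupling inequalities and series estimates for multiple stable integrals. Once this quantitative control is in hand, the dominated convergence theorem follows from the soft reduction to the deterministic limit $\Phi(\la d_n)\to 0$ described above.
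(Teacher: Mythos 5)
Your proposal is correct in outline, but be aware that the paper itself offers no proof of this proposition: it is quoted verbatim as Theorem 6.2(c) of Kallenberg and Szulga \cite{KS89}, so there is no internal argument to match against. What you have done is reconstruct the standard proof of that cited theorem, and the deterministic half of your reconstruction is complete and correct: $|d_n|\le 2g$ a.e., the monotonicity of $x\mapsto x^{\al}(1+\log_+(x/c))$, the scaling estimate via $\log_+(ab)\le\log_+a+\log_+b$ showing that $2\la g$ satisfies \eqref{slfjlsfhg} for every $\la>0$, and the ordinary dominated convergence theorem yielding $\Phi(\la d_n)\to 0$ for all $\la>0$. The remaining step --- that modular convergence implies convergence in probability of the double integral --- is exactly the substance of the Kallenberg--Szulga machinery (decoupling plus series/Orlicz-modular estimates; see also Kwapie\'n and Woyczy\'nski \cite{KW87}), and you flag it honestly rather than prove it, which puts your write-up on the same logical footing as the paper's citation while exposing the mechanism. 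Two caveats are worth recording. First, the clean tail inequality $\P\big(\big|\int_{\R^2} d_n\,dZ^1\,dZ^2\big|>\ep\big)\le C\,\Phi(d_n/\ep)$ with a universal $C=C(\al)$ is a stronger and more delicate statement than what is literally needed or literally stated in \cite{KS89}; what their results provide is $L^0$-continuity of the multiple integral with respect to the modular (convergence in probability is equivalent to convergence of the associated modulars), which suffices for your argument and which your parenthetical hedge about the $L^0$-quasinorm correctly anticipates --- so nothing breaks, but the inequality should not be asserted as if it were off-the-shelf. Second, $Z^1$ and $Z^2$ here are components of a two-dimensional symmetric $\al$-stable L\'evy process and need not be independent, so the decoupling step implicitly requires the same polarization-type extension that the paper mentions after Proposition \ref{int-condi}; your sketch silently works in the decoupled setting. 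Neither caveat is a genuine gap given that the key estimate lives in the same reference the paper invokes; in effect your proposal supplies strictly more detail than the paper does for this statement.
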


\section{Proof of Theorem \ref{main}} \label{sec3} \label{sec3}
\setcounter{equation}{0}
\renewcommand{\theequation}{\thesection.\arabic{equation}}
Throughout this section all positive constants are denoted by $C$, although they may change from line to line. We start with the identity
\bee \label{reprX}
X_j-X_{j-1} = \int_{\R} \exp(ijs) r(s) \,dL_s \qquad \text{with} \qquad 
r(s):= \frac{1-\exp(-is)}{is} |s|^{1-H-1/\al}. 
\eee
In particular, we have that 
\bee
\|r(s)\| \leq C\left(|s|^{1-H-1/\al} 1_{\{|s|\leq 1\}} + |s|^{-H-1/\al} 1_{\{|s|> 1\}}  \right),
\eee 
which implies the statement
\bee \label{rfinite}
\|r\|_{\al}<\infty.
\eee
By using the representations \eqref{reprX} and \eqref{lsjdfljs} on the function $g:\R\to\CC$ given by $g(s)=\exp(ijs) r(s)$ we deduce the identity
\bee\label{ljsldfjsdljf}
\|X_j-X_{j-1}\|^2 = V_j+U,
\eee 
where 
\begin{align} 
U&:= 2 \int_{\R} |s|^{-2H-2/\al}
\left(1-\cos(s) \right) d([L^1]_s+[L^2]_s) \qquad \text{and} \nonumber \\[1.5 ex]
\label{decomp} V_j &:= 2\text{Re}\Big(\int_{\R^2} \exp(ij(s-u)) r(s) \overline{r}(u) 1_{\{u<s\}} \,d\overline{L}_u\, dL_s\Big) . 
\end{align}

\subsection{Proof of convergence in \eqref{lln}}
By the decomposition \eqref{ljsldfjsdljf} we have 
\begin{equation}
\frac{1}{n}\sum_{j=0}^{n-1} \|X_j-X_{j-1}\|^2 = U +\frac{1}{n} \sum_{j=0}^{n-1} V_j,
\end{equation}
and thus we only need to prove that 
$n^{-1} \sum_{j=0}^{n-1} V_j \toop 0$. We first note that
\bee \label{ident}
\frac{1}{n} \sum_{j=0}^{n-1} \exp(ij(s-u)) =  \frac{1 - \exp(in(s-u))}{n(1 - \exp(i(s-u)))}\to 0
\qquad \text{as } n\to \infty,
\eee
for any $s-u \not \in \{2\pi m:~m\in \Z\}$, using the geometric series.  On the other hand, we have
\bee
\| \exp(ij(s-u)) r(s) r(u) \| \leq \|r(s) \| \|r(u) \|.
\eee
In view of \eqref{rfinite} and Remark~\ref{rem1}(A), we can apply Proposition \ref{propdom} to conclude that 
\bee
n^{-1} \sum_{j=0}^{n-1} V_j \toop 0, 
\eee
which  implies \eqref{lln}.

\subsection{Proof of weak convergence}
We will divide the proof into several steps. We write $g_n(x):= (1 - \exp(inx))/(1 - \exp(ix))$
and define the following complex-valued functions
\begin{align}
h_n(s,u) &:= n^{1-2H}g_n(s-u) r(s) \overline{r}(u) 1_{\{u<s\}}, \\[1.5 ex]
h(s,u)&:=   \frac{1 - \exp(i(s-u))}{i(s-u)} |su|^{1-H-1/\al} 1_{\{u<s\}}.
\end{align}
By the representation \eqref{ljsldfjsdljf} we have 
\begin{equation}
n^{2-2H}\left(\frac{1}{n}\sum_{j=0}^{n-1} \|X_j -X_{j-1} \|^2 - U \right) 
= n^{1-2H} \sum_{j=0}^{n-1} V_j
=  2\RE\Big( 
\int_{\R^2} h_n(s,u)\, d\overline{L}_u \,dL_s\Big),
\label{ljsdlfjlsdjf}
\end{equation}
where the last equality follows from the identity in \eqref{ident} and the definition of $h_n$. Thus, to 
show the limit theorem \eqref{clt} it is enough to show the convergence 
\begin{equation}\label{jlsjdflhlsh}
\int_{\R^2} h_n(s,u)\, d\overline{L}_u \,dL_s\schw
 \int_{\R^2} h(s,u)\, d\overline{L}_u \,dL_s.
\end{equation}
The following subsections will be denoted to the proof of \eqref{jlsjdflhlsh}, and to this aim we define $$\gamma:=1-H-1/\alpha.$$
%In the light of \eqref{sdf}
%
%To show the limit theorem \eqref{clt} it is enough to show that 
%\begin{equation}
%\int_{\R^2} h_n(s,u)\, d\overline{L}_u \,dL_s\schw
% \int_{\R^2} h(s,u)\, d\overline{L}_u \,dL_s
%\end{equation}
%
%
%we wi

\subsubsection{Existence of the limit}
In this subsection we will show that the limiting integral $\int_{\R^2} h(s,u)\, d\overline{L}_u \,dL_s$ from \eqref{jlsjdflhlsh} is well-defined 
by proving  that $h$ satisfies condition \eqref{slfjlsfhg}. 
Set  
\bee \label{defpsi}
\psi(s) := c\left(|s|^{-r}\1_{\{|s|>1\}}+\1_{\{|s|\leq 1\}}\right)
\eee 
for all $s\in \R$, where $r$ is any real number satisfying $r>\alpha^{-1}$ and $c=c_r>0$ is chosen such that  $\int_\R \psi(s)^\alpha\,ds=1$. To show condition \eqref{slfjlsfhg} with respect to the function $\psi$ we will use the following lemma.

\begin{lem} \label{lembound}
Let $f:\R^2 \to \mathbb{C}$ be a complex-valued function satisfying the inequality
\bee \label{condonf}
\|f(s,u)\|\leq C |su|^{-r_1} \left(1_{[s-1,s)}(u) + |s-u|^{-r_2} 1_{(-\infty,s-1)}(u)  \right),
\eee
where the numbers $r_1,r_2$ satisfy the conditions $r_1\in (1/2,1)$ and $2r_1+r_2>2$. 
Then it holds that 
\bee
\int_{\R^2} \|f(s,u)\| \,du\,ds<\infty.
\eee
\end{lem}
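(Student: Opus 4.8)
The plan is to split the double integral $\int_{\R^2}\|f(s,u)\|\,du\,ds$ according to the two regions appearing in the bound \eqref{condonf}, namely the near-diagonal strip $\{s-1\leq u<s\}$ and the far region $\{u<s-1\}$, and bound each separately using $\|f(s,u)\|\leq C|su|^{-r_1}(1_{[s-1,s)}(u)+|s-u|^{-r_2}1_{(-\infty,s-1)}(u))$. In both cases I would first carry out the inner integral over $u$ and then show that the resulting function of $s$ is integrable, being careful about the behaviour near $s=0$ (where $|s|^{-r_1}$ may blow up) and near $s=\pm\infty$ (where the decay must be fast enough).

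For the near-diagonal strip, the contribution is $\int_\R |s|^{-r_1}\big(\int_{s-1}^{s}|u|^{-r_1}\,du\big)\,ds$. The inner integral $\int_{s-1}^s|u|^{-r_1}\,du$ is bounded uniformly in $s$ since $r_1<1$ makes $|u|^{-r_1}$ locally integrable, and for large $|s|$ it behaves like $|s|^{-r_1}$; hence the strip contribution is controlled by $\int_{|s|\leq 2}|s|^{-r_1}\,ds + \int_{|s|>2}|s|^{-2r_1}\,ds$. The first piece is finite because $r_1<1$, and the second because $2r_1>1$ (indeed $r_1>1/2$); so this region is harmless.

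The far region is the more delicate one, and I expect it to be the main obstacle. Here the contribution is $\int_\R|s|^{-r_1}\big(\int_{-\infty}^{s-1}|u|^{-r_1}|s-u|^{-r_2}\,du\big)\,ds$, which is a convolution-type integral. The natural tool is a Hardy--Littlewood--Sobolev / beta-type estimate: for $|s|$ large, substituting $u=sv$ one expects $\int_{|u|<|s|}|u|^{-r_1}|s-u|^{-r_2}\,du \sim C|s|^{1-r_1-r_2}$ provided the exponents are in the convergent range, so that after multiplying by $|s|^{-r_1}$ the integrand decays like $|s|^{1-2r_1-r_2}$, which is integrable at infinity exactly because $2r_1+r_2>2$. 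The subtle points are (a) checking integrability of the inner convolution near its two singularities $u=0$ and $u=s$, which requires $r_1<1$ and $r_2<1$ locally, and (b) handling the region near $s=0$ separately, where the $|s-u|^{-r_2}$ factor with $u<s-1$ is bounded and one is left with an integrable $|s|^{-r_1}$ tail in $u$. I would therefore organize the far region into the subcase of large $|s|$ (scaling argument giving the decisive exponent $1-2r_1-r_2<-1$) and the subcase of bounded $s$ (where the singularities are away from the diagonal and a crude bound suffices).

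Once both regions are shown to contribute finitely, the two bounds add up to give $\int_{\R^2}\|f(s,u)\|\,du\,ds<\infty$, completing the proof. The conditions $r_1\in(1/2,1)$ and $2r_1+r_2>2$ enter exactly at the two pinch points identified above: $r_1>1/2$ secures convergence at infinity in the near-diagonal strip, $r_1<1$ secures local integrability of the $|u|^{-r_1}$ singularity, and $2r_1+r_2>2$ secures the decay at infinity in the far region.
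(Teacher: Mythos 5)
Your decomposition and your treatment of the near-diagonal strip coincide with the paper's proof, and for the far region you use the same convolution/scaling idea (the paper integrates in $s$ for fixed $u$ via the substitution $s=xu$, whereas you integrate in $u$ for fixed $s$; this difference is immaterial). There is, however, one point at which your outline, as written, would not go through: you say that convergence of the inner convolution near $u=s$ ``requires $r_1<1$ and $r_2<1$ locally'', and your claimed asymptotic $\int |u|^{-r_1}|s-u|^{-r_2}\,du\sim C|s|^{1-r_1-r_2}$ is only correct under the assumption $r_2<1$. But $r_2<1$ is not a hypothesis of the lemma, and in the paper's application one takes $r_2=\alpha$, which may exceed $1$.

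The repair is to use explicitly that on the far region $u<s-1$ one has $|s-u|\ge 1$, so the singularity at $u=s$ is excised at distance $1$ and no local integrability of $|s-u|^{-r_2}$ is needed. The price is that the part of the convolution where $|s-u|$ is of order one contributes an extra term of order $|s|^{-r_1}$ when $r_2\ge 1$, so the correct bound on the inner integral for $|s|>1$ is $C\left(|s|^{-r_1}+|s|^{1-r_1-r_2}\right)$ rather than the single power you state; this two-term bound is exactly what the paper records (with the roles of $s$ and $u$ exchanged). After multiplying by the outer factor $|s|^{-r_1}$, the additional term gives $|s|^{-2r_1}$, which is integrable at infinity precisely because $r_1>1/2$, so the conclusion is unaffected. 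In short: same strategy as the paper, correct treatment of the strip, but the far-region estimate must be restated so that it does not rely on the unavailable condition $r_2<1$.
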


\begin{proof}
According to condition \eqref{condonf} we have that $\|f(s,u)\|\leq f_1(s,u) + f_2(s,u)$ and hence
we need to show the integrability of the functions $f_1$ and $f_2$. We have that
\[
\int_{s-1}^s |u|^{-r_1} du \leq C \left(1_{[-1,1]}(s) + s^{-r_1} 1_{[-1,1]^c}(s) \right)
\]
since $r_1\in (1/2,1)$. Hence, we conclude
\[
\int_{\R^2} f_1(s,u) \,du \,ds \leq C \int_{\R} |s|^{-r_1} \left( 1_{[-1,1]}(s) + 
s^{-r_1} 1_{[-1,1]^c}(s)\right) ds
\]
and the latter is finite as $r_1\in (1/2,1)$. 

Now we turn our attention to the function $f_2$. We use the substitution $s=xu$ to obtain 
the bound
\begin{align}
\int_{u+1}^{\infty} |s|^{-r_1} |s-u|^{-r_2} ds &= |u|^{1-r_1-r_2} 
\int_{\frac{u+1}{u}}^{\infty} |x|^{-r_1} |x-1|^{-r_2} dx \\[1.5 ex]
&\leq C  \left(1_{[-1,1]}(u) + \left(|u|^{-r_1}+|u|^{1-r_1-r_2} \right)1_{[-1,1]^c}(u)\right)
\end{align}
Indeed, the latter integral is finite since $r_1+r_2>2r_1+r_2 -1>1$. Thus, we deduce
\[
\int_{\R^2} f_2(s,u) \,ds \,du \leq C \int_{\R} |u|^{-r_1} \left( 1_{[-1,1]}(u) 
+ \left(|u|^{-r_1}+|u|^{1-r_1-r_2} \right)1_{[-1,1]^c}(u) \right) ds < \infty
\]
due to assumptions $r_1\in (1/2,1)$ and $2r_1+r_2>2$. This completes the proof.
\end{proof}

\noindent
Now, we proceed with the proof of existence. First of all, we observe that the function 
$\|h(s,u)\|^{\alpha}$ satisfies the condition  \eqref{condonf} with 
\[
r_1 = \alpha \gamma \qquad \text{and} \qquad r_2= \alpha.
\]
It is easy to see that assumptions $r_1\in (1/2,1)$ and $2r_1+r_2>2$ are satisfied due to 
$H>1/2$ and $\alpha(1-H)<1/2$. On the other hand, for any $\ep>0$ we readily deduce that 
\bee \label{logest}
\log_+\left(\frac{\|h(s,u)\|}{\psi(s)\psi(u)}\right)\leq C\left(|s|^{\ep} + |s|^{-\ep} \right)
\left(|u|^{\ep} + |u|^{-\ep} \right).
\eee
When we choose $\ep>0$ small enough, we conclude that the function 
\[
\|h(s,u)\|^{\alpha} \left(1+ \log_+\left(\frac{\|h(s,u)\|}{\psi(s)\psi(u)}\right)\right)
\]
also satisfies the conditions of Lemma \ref{lembound}. This proves the existence of the double integral 
$\int_{\R^2} h(s,u) \,d\overline{L}_u \,dL_s$, cf.\ Proposition~\ref{int-condi}.

\subsubsection{Main decomposition}
Now, we show the weak limit theorem in \eqref{clt}. For this purpose, we introduce the sets 
\[
\mathcal{Z}:=\{2\pi j:~j\geq 0 \} \qquad \text{and} \qquad \mathcal{Z}_{\de}:=\{x \in \R_+:~|x-\mathcal{Z}|<\de \}
\]
for some fixed $\de\in (0,1)$. For $x\in \R_+$ we also use the notation $[x]:=2\pi j$ where 
$2\pi j\in \mathcal{Z}$ is the closest number to $x$. We obtain the following estimate for the 
function $g_n$:
\begin{align}
\|g_n(x)\| &\leq C \left(\left\| \frac{1 - \exp(in(x-[x]))}{i(x-[x])} \right\| 1_{\mathcal{Z}_{\de}} (x) + \de^{-1}1_{\mathcal{Z}^c_{\de}} (x) \right) \nonumber \\[1.5 ex]
 &\leq C \left( n 1_{\mathcal{Z}_{\de} \cap \mathcal{Z}_{1/n}} + \frac{1}{|x-[x]|} 1_{\mathcal{Z}_{\de} \cap \mathcal{Z}_{1/n}^c} + \de^{-1}1_{\mathcal{Z}^c_{\de}} (x)  \right)
\nonumber \\[1.5 ex]
\label{gnbound} &\leq C \left(n^{1-a} |x-[x]|^{-a} 1_{\mathcal{Z}_{\de}} + \de^{-1}1_{\mathcal{Z}^c_{\de}} (x) \right)
\end{align}
for any $a\in (0,1)$.
We  decomposition the double integral on the right-hand side of 
\eqref{ljsdlfjlsdjf} as follows
\bee \label{ljsdlfhjsldh} 
\int_{\R^2} h_n(s,u)\, d\overline{L}_u \,dL_s = Z_n^{(1)} +
Z_n^{(2)} + Z_n^{(3)},
\eee
with 
\begin{align}
Z_n^{(1)} &:= \int_{(-\de,\de)^2} h_n(s,u) \,d\overline{L}_u\, dL_s \\[1.5 ex]
Z_n^{(2)} &:= \int_{(-\de,\de)^c \times (-\de,\de)^c} h_n(s,u) \,d\overline{L}_u\, dL_s \\[1.5 ex]
Z_n^{(3)} &:= \int_{\big((-\de,\de)^c \times (-\de,\de) \big)\cup \big((-\de,\de) \times (-\de,\de)^c\big)} h_n(s,u)\, d\overline{L}_u\, dL_s.
\end{align}
In the following, we will prove that $Z_n^{(1)}$ is the dominating term and show its convergence, while we will prove that $
\|Z_n^{(2)}\| + \|Z_n^{(3)}\|\toop 0$. The following identity in distribution 
\bee \label{idenindi}
\int_A f(s,u) \,d\overline{L}_u\, dL_s \eqschw n^{-2/\al} \int_{nA} f(s/n,u/n)\, d\overline{L}_u\,dL_s,
\eee
where $nA:=\{(ns,nu)\!:~ (s,u)\in A\}$, will be used in our proof.   The identity \eqref{idenindi}  follows  from the self-similarity of $L$; more precisely, from the  fact that  $(L_{t/n})_{t\in \R}$ equals  $(n^{-1/\alpha} L_t)_{t\in \R}$ in finite dimensional distributions. 

\subsubsection{Term $Z_n^{(1)}$}\label{ljsdflj-1}
We apply the distributional identity \eqref{idenindi} to deduce that
\bee
Z_n^{(1)} \eqschw n^{-2/\al} \int_{(-\de n,\de n)^2} h_n(s/n,u/n) \,d\overline{L}_u\, dL_s.
\eee
For any fixed $(s,u)$ with $s-u \not \in \mathcal{Z}$ we obtain the convergence
\bee
\lim_{n\to \infty} n^{\ga} r(s/n) = |s|^{\ga} \qquad \text{and} \qquad 
\lim_{n\to \infty} n^{-1} g_n(s-u) =  \frac{1-\exp(i(s-u))}{i(s-u)}.
\eee
Hence, we conclude that 
\bee
\lim_{n\to \infty} n^{-2/\al} h_n(s/n,u/n) = h(s,u). 
\eee
On the other hand, we have that 
\bee
\|n^{-2/\al} h_n(s/n,u/n)\|^{\al} \leq C |su|^{ \gamma}
\left( \1_{\{s-1\leq u<s\}}+ \1_{\{u<s-1\}} |s-u|^{-\alpha}\right)=:f(s,u).
\eee
As in the proof of existence we deduce that the function $f$ satisfies the condition  \eqref{slfjlsfhg}.  Thus, 
\bee 
Z_n^{(1)} \schw 
\int_{\R^2} \frac{1-\exp(i(s-u)) }{i(s-u)} |su|^{1-H-1/\al} 1_{\{u<s\}} \,d\overline{L}_u\, dL_s
\eee
due to Proposition \ref{propdom}.

\subsubsection{Term $Z_n^{(2)}$}\label{ljsdflj-2}

We will use the approximation \eqref{gnbound}. First of all, 
we have that 
\begin{equation}\label{ljsdlfjlsj}
\|h_n(s,u)\| 1_{ \mathcal{Z}_{\de}^c}(s-u )\leq C \de^{-1} n^{1-2H} |su|^{\ga -1} 1_{\{u<s\}} .
\end{equation}
Note that  $n^{1-2H}\to 0$
since $H>1/2$. On the other hand, the function $f(s,u):= |su|^{\ga -1} 1_{\{u<s\}}$ satisfies the condition
\[
\int_{(-\de,\de)^c \times (-\de,\de)^c} f(s,u)^{\alpha}  \left(1+ \log_+\left(\frac{f(s,u)}{\psi(s)\psi(u)}\right)\right) du \,ds<\infty
\] 
where we use an estimate of the type \eqref{logest} for the log term. 
For all $a\in (0,1)$ we have
\begin{align}\label{ljsdlfjslj}
\|h_n(s,u)\|1_{\mathcal{Z}_{\de}} (s-u) 
 &\leq C n^{2-2H -a} |su|^{\ga -1} \left|(s-u) - [s-u]\right|^{-a} 1_{\{u<s\}}.
\end{align}
It is easy to show that the function $(s,u)\mapsto  |su|^{\ga -1} |(s-u) - [s-u]|^{-a} 1_{((-\de,\de)^c)^2}(s,u) 1_{\{u<s\}}$ satisfies the condition \eqref{slfjlsfhg} when $a\al < 1$. Choosing $a=1/\al -\ep$ we deduce that
\[
n^{2-2H -a} = n^{2-2H -1/\al +\ep} \to 0 \qquad \text{as } n\to \infty,
\]
if we choose $\ep>0$ small enough since $\al(1-H)<1/2$. We, therefore, conclude that  there exists a measurable function $\phi$ satisfying \eqref{slfjlsfhg} such that $|h_n(u,v)|\leq \phi(u,v)$ for all  
$(u,v)\in (-\de,\de)^c \times (-\de,\de)^c$. Since $h_n(u,v)\to 0$ Lebesgue almost surely,  cf.\ \eqref{ljsdlfjlsj}--\eqref{ljsdlfjslj}, Proposition~\ref{propdom} yields $Z_n^{(2)} \toop 0$.

\subsubsection{Term $Z_n^{(3)}$}\label{ljsdflj-3} 
The proof is similar to the one in Subsection~\ref{ljsdflj-2}. For simplicity we only consider the integration region $(-\de,\de)\times (-\de,\de)^c$; the  
$(-\de,\de)^c\times (-\de,\de)$ case is  analogue. For all $(u,s)\in (-\de,\de)\times (-\de,\de)^c$:
\begin{equation}\label{ljlsjdfljs}
\|h_n(s,u)\| 1_{ \mathcal{Z}_{\de}^c}(s-u )\leq C \de^{-1} n^{1-2H} |u|^{\ga}|s|^{\ga -1} 1_{\{u<s\}} .
\end{equation}
We have  $n^{1-2H}\to 0$ and the function $f(s,u):=|u|^{\ga } |s|^{\ga -1} 
1_{(-\de,\de)}(u)1_{(-\de,\de)^c}(s)1_{\{u<s\}}$ satisfies the condition  \eqref{slfjlsfhg}.
On the other hand, we  have 
\begin{equation}\label{ljlsdjfljsdlfjs}
\|h_n(s,u)\|1_{\mathcal{Z}_{\de}} (s-u) 
 \leq C n^{2-2H -a} |u|^{\ga } |s|^{\ga -1}  \left|(s-u) - [s-u]\right|^{-a} 1_{\{u<s\}}.
\end{equation}
The bounds \eqref{ljlsjdfljs} and \eqref{ljlsdjfljsdlfjs} imply the existence of 
 a measurable function $f$ satisfying \eqref{slfjlsfhg} such that $|h_n(u,v)|\leq f(u,v)$ for all $(u,v)\in (-\de,\de)\times (-\de,\de)^c$. Since a similar bound holds on 
 $(-\de,\de)^c\times (-\de,\de)$ we  conclude, cf.\ 
  Proposition~\ref{propdom}, that   $Z_n^{(3)} \toop 0$ since $h_n\to 0$ 
  (cf.\  \eqref{ljlsjdfljs}--\eqref{ljlsdjfljsdlfjs}).

By the decomposition \eqref{ljsdlfhjsldh} and the 
convergence results of Subsections~\ref{ljsdflj-1}--\ref{ljsdflj-3} 
we obtain the limit theorem \eqref{jlsjdflhlsh}. This completes the proof of 
\eqref{clt} and of Theorem~\ref{main}. 
\qed

\bibliographystyle{chicago}
 
\end{document}